\newcommand{\graph}{\operatorname{G}}
\newcommand{\lcm}{\operatorname{lcm}}
\newcommand{\simp}{{\textrm{s}}}
\newcommand{\F}{\mathbb F}
\newcommand{\N}{\mathbb N}
\begin{document}

\begin{frontmatter}

\title
{The maximum diameter of pure simplicial complexes and pseudo-manifolds}

\author{Francisco Criado and
Francisco Santos%
\thanksref{fsgrant}}

\address
{
{Departamento de Matem\'aticas, Estad\'istica y Computaci\'on}\\
{Universidad de Cantabria, E-39005 Santander, Spain}
}

\thanks[fsgrant]
{Work of F.~Santos is supported in part by the Spanish Ministry of Science (MICINN) through grant MTM2014-54207P.
E\_mail: fcriado92@gmail.com, francisco.santos@unican.es}


\begin{abstract}
We construct $d$-dimensional pure simplicial complexes and pseudo-manifolds (without boundary) with $n$ vertices whose combinatorial diameter grows as $c_d n^{d-1}$ for a constant $c_d$ depending only on $d$, which is the maximum possible growth. Moreover, the constant $c_d$ is optimal modulo a singly exponential factor in $d$. The pure simplicial complexes improve on a construction of the second author that achieved $c_d n^{2d/3}$. For pseudo-manifolds without boundary, as far as we know, no construction with diameter greater than $n^2$ was previously known.
\end{abstract}

\begin{keyword}
Simplicial complex, hyper-graph, pseudo-manifold, diameter, Hirsch conjecture
\end{keyword}



\end{frontmatter}


\section{Introduction}

A \emph{pure simplicial complex of dimension $d-1$} (or a \emph{$(d-1)$-complex}, for short) is any family $C$ of $d$-element subsets of a set $V$ (typically, $V=[n]:=\{1,\dots,n\}$). Elements of $C$ are called \emph{facets} and any subset of a facet is a \emph{face}\footnote{The standard usage is to consider all faces, not only facets, as elements of $C$, and then call facets the maximal ones. Our approach is equivalent and, for our purposes, simpler.}. More precisely, a $k$-face is a face with $k+1$ elements. Faces of dimensions $0$, $1$, and $d-2$ are called, respectively, \emph{vertices}, \emph{edges} and \emph{ridges} of $C$.  Observe that a pure $(d-1)$-complex is the same as a \emph{uniform hypergraph of rank $d$}. Its facets are called \emph{hyperedges} in the hypergraph literature.

The \emph{adjacency graph} or \emph{dual graph} of a pure simplicial complex $C$, denoted $\graph(C)$, is the graph having as vertices the facets of $C$ and as edges the pairs of facets $X,Y\in C$ that differ in a single element (that is, those that share a ridge). 
Complexes with a connected adjacency graph are called \emph{strongly connected}. The \emph{combinatorial diameter} of $C$ is the diameter, in the graph theoretic sense, of $\graph(C)$.

We are interested in how large can the diameter of a pure simplicial complex be in terms of its dimension and number of vertices. For this we set:
\[
\begin{tabular}{rl}
$H_\simp(n,d):=$& maximum diameter of pure strongly connected \\
& $(d-1)$-complexes with $n$ vertices.
\end{tabular}
\]

The function $H_\simp(n,d)$ is known to be exponential in $d$:
\begin{theorem}[Santos~\protect{\cite[Corollary 2.12]{Santos:progress}}]
\label{thm:Santosbound}
\[
\Omega\left(\frac{n}{d}\right)^{\frac{2d}{3}} \le H_\simp(n,d) \le \frac{1}{d-1}\binom{n}{d-1}\simeq \frac{n^{d-1}}{d!}.
\]
\end{theorem}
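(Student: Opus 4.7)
The plan is to treat the two bounds separately, since they call for quite different techniques.

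\paragraph{Upper bound.} I would fix a geodesic $F_0, F_1, \dots, F_D$ in $\graph(C)$ realizing the diameter of a strongly connected pure $(d-1)$-complex $C$, and write $R_i := F_{i-1} \cap F_i$ for the ridge crossed at step $i$. My first claim is that $R_1, \dots, R_D$ are pairwise distinct: if $R_i = R_j$ for $i<j$, then $F_{i-1}$ and $F_j$ both contain this common ridge, so they are adjacent in $\graph(C)$, producing a strictly shorter facet path, a contradiction. Since every ridge is a $(d-1)$-subset of $[n]$, this already yields $D \le \binom{n}{d-1}$. To squeeze out the extra factor $\tfrac{1}{d-1}$, I would set up a refined double count using that consecutive ridges $R_i, R_{i+1}$ both lie in the single $d$-set $F_i$ and that the $D(d-1)$ vertex-ridge incidences accumulated along the path distribute among only $\binom{n}{d-1}$ potential ridges. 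The bookkeeping in this last step is the one piece of real work in this direction.

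\paragraph{Lower bound.} I would construct the required complexes by induction on $d$. The case $d=3$ is instructive: a pure $2$-complex on $n$ vertices can attain diameter $\Omega(n^2)$ by a "long strip" of triangles whose dual path winds through $\Omega(n^2)$ triangles while aggressively reusing vertices, with consecutive triangles sharing an edge and no edge shared by non-consecutive triangles. For the inductive step, given a $(d'-1)$-complex on $n'$ vertices of diameter $D'$, I would combine it with an auxiliary gadget to produce a $(d'+k-1)$-complex on $O(n'+n'')$ vertices whose diameter is \emph{multiplicatively} larger, of order $D'\cdot D''$. A plain join merely adds diameters, so the attachment must be a twisted product designed so that every step of the outer walk forces a full traversal of a fresh copy of the inner gadget. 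Taking $k=3$ per iteration, with each new level contributing an $\Omega(n^2)$ factor via the base gadget, and balancing the $n$ vertices among $\Theta(d/3)$ levels yields diameter $\Omega((n/d)^{2d/3})$.

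\paragraph{Main obstacle.} The upper bound is essentially a clean incidence argument; the only subtlety is nailing the exact constant $\tfrac{1}{d-1}$. The lower bound is where the real work lies: naive joins or Cartesian products of complexes yield only additive gains in diameter, so the heart of the construction is designing a gadget that enforces multiplicative growth while preserving strong connectivity, reusing vertices efficiently, and ruling out shortcuts created by the shared vertex set of the two factors. Certifying the absence of such shortcuts—i.e., verifying that the shortest dual path in the combined complex is genuinely of length $D'\cdot D''$ rather than something much smaller—is the principal technical hurdle and what I expect to dominate the proof of this direction.
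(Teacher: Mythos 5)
Your upper-bound plan is sound and is essentially the ridge-counting argument the paper alludes to; the ``bookkeeping'' you defer is in fact short. On a geodesic $F_0,\dots,F_D$ no two non-consecutive facets can share any $(d-1)$-subset (they would be adjacent in $\graph(C)$ and shortcut the geodesic), while consecutive facets share exactly one. Hence the $(D+1)d$ facet--ridge incidences along the path involve $(D+1)d-D=D(d-1)+d$ \emph{distinct} ridges, so $D(d-1)+d\le\binom{n}{d-1}$ and $D\le\frac{1}{d-1}\binom{n}{d-1}$. (Your phrase ``vertex-ridge incidences'' should be ``facet--ridge incidences'', and the relevant count is the one above, not $D(d-1)$.)

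The genuine gap is in the lower bound, and it stems from a premise you pose as an obstacle: ``a plain join merely adds diameters, so the attachment must be a twisted product.'' The paper states that the lower bound ``comes from a construction using the join operation,'' and the plain join is indeed the right tool; what you are missing is the observation that makes it multiplicative. The dual graph of the join $C_1 * C_2$ is the Cartesian product of $\graph(C_1)$ and $\graph(C_2)$; if both factors are corridors this is an $N_1\times N_2$ grid, whose \emph{diameter} is only about $N_1+N_2$ but which contains an induced (serpentine) path through roughly $N_1N_2/2$ of its vertices. The subfamily of facets of the join indexed by that snake is itself a pure complex whose dual graph is exactly that induced path, i.e., a corridor of length about $N_1N_2/2$ --- so no separate ``no-shortcut'' verification is needed beyond the path being induced in the grid, which is classical. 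Iterating with $d/3$ quadratic-diameter $2$-dimensional corridors on $3n/d$ vertices each then gives $\Omega(n/d)^{2d/3}$. Without this observation (or an explicit construction of the ``twisted product'' gadget you invoke but leave undefined), your lower-bound argument does not go through. Note finally that the paper does not prove this statement itself; it cites \cite{Santos:progress}, where the argument is the join-plus-snake one just described.
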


The upper bound is obtained by  counting the possible number of ridges, while the lower bound comes from a construction using the \emph{join} operation. Another construction giving a lower bound of type $n^{\frac{d}{4}}$ is contained in~\cite[Thm.~4.4]{Kim:abstractions}. In this short note we show a simple and (relatively) explicit construction giving

\begin{theorem}
\label{thm:pure_complexes}
For every $d\in \N$ there are infinitely many $n\in\N$ such that:
\[
  H_\simp(n,d) \ge \frac{n^{d-1}}{(d+2)^{d-1}}-3.
\]
\end{theorem}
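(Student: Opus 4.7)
Plan. I would aim to construct, for each $d\ge 2$ and each positive integer $m$, an explicit pure $(d-1)$-complex on $n=(d+2)m$ vertices whose dual graph contains a geodesic of length $\ge m^{d-1}-1$. Substituting $m=n/(d+2)$ then gives the bound for this arithmetic progression of $n$, and values of $n$ not divisible by $d+2$ are handled by a trivial padding argument (adding up to $d+1$ auxiliary vertices/facets), with the $-3$ slack in the statement absorbing the loss.

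Setup. I partition the vertex set into $d+2$ blocks $V_1,\dots,V_{d+2}$, each of size $m$, and write $V_j=\{v_{j,1},\dots,v_{j,m}\}$. The construction is a sequence of $d$-element facets $F_0,F_1,\dots,F_L$ with $L\ge m^{d-1}$ whose ``position indices'' range over $[m]^{d-1}$ along a Hamiltonian path of the grid graph (say, a boustrophedon). Concretely, to a lattice point $\sigma=(a_1,\dots,a_{d-1})$ I attach the facet
\[
F(\sigma)=\{v_{1,a_1},\,v_{2,a_2},\,\dots,\,v_{d-1,a_{d-1}},\,t(\sigma)\},
\]
where $t(\sigma)\in V_d\cup V_{d+1}\cup V_{d+2}$ is a ``tag vertex''. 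The complex is the set of all such facets, plus possibly a handful of auxiliary facets needed to guarantee strong connectivity without shortening distances.

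Key step and main obstacle. For the sequence to form a geodesic in the dual graph I need (a) consecutive facets $F(\sigma_t),F(\sigma_{t+1})$ to share exactly $d-1$ vertices (dual-adjacency) and (b) every pair of non-consecutive facets to share at most $d-2$ vertices (no shortcuts). Condition (a) forces the tag to be constant across each edge of the Hamiltonian path, while condition (b) forces the tag to distinguish any two grid-adjacent lattice points that are not consecutive on the path. No tag function defined on $[m]^{d-1}$ alone can meet both demands, so the walk must be extended at selected steps by short ``tag-update'' micro-detours, in which consecutive facets differ only in the tag vertex; these are inserted where the snake reverses direction in one of its coordinates. With $3m$ distinct tag values in the three auxiliary blocks I would show, by a counting argument, that one can encode at each facet both the snake-row index and the current traversal direction, which suffices to distinguish every potentially-conflicting pair of facets. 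The bulk of the proof is then the combinatorial design of this tag scheme together with a case analysis, over the coordinate in which two non-consecutive facets' position tuples differ, verifying that (b) holds; the rest of the argument (strong connectivity, vertex count, and converting $m^{d-1}$ into the bound in the statement) is arithmetic.
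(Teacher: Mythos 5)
Your overall strategy (a long induced path in the Johnson graph, vertices split into $d+2$ colour classes of size $m=n/(d+2)$, facets indexed by a snake through $[m]^{d-1}$) is in the right spirit, but the tag scheme as described does not work, and the gap is not a technicality. Two facets $F(\sigma)$, $F(\sigma')$ whose position tuples agree in all but one coordinate already share $d-2$ position vertices \emph{regardless of how far apart they are in that coordinate}; if they also carry the same tag they share $d-1$ vertices and are therefore adjacent in the dual graph. So your condition (b) must distinguish every pair of \emph{collinear} (not merely grid-adjacent) lattice points that are not path-consecutive. In particular, along a single row of the boustrophedon, where your condition (a) forces the tag to be constant, all $\binom{m}{2}$ pairs of facets become mutually adjacent: each row is a clique in the dual graph, contributes $O(1)$ rather than $m$ to the distance between the endpoints, and the construction yields a diameter of order $m^{d-2}$ at best. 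The ``tag-update micro-detours at direction reversals'' cannot repair this, because the shortcuts occur in the interior of every row, not at the turns.

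Any genuine fix forces the tag to change at essentially every step, interleaved with the position steps, and then one needs the tags appearing along every axis-parallel line, in all $d-1$ directions simultaneously, to be pairwise distinct except at path-consecutive positions --- a nontrivial combinatorial design that your asserted ``counting argument'' does not supply and that is not obviously satisfiable with only $3m$ tag values. This is precisely the difficulty the paper's proof avoids by abandoning the grid: it generates the vertex sequence by a linear recurrence of order $d-1$ over $\F_q$ given by a primitive polynomial with no zero coefficients, colours the terms cyclically with $d+2$ colours, and takes the length-$d$ windows as facets. The recurrence (run forwards or backwards, which is where the nonvanishing of the coefficients enters) guarantees that a ridge together with the colour of the missing vertex determines that vertex uniquely, so every ridge lies in exactly two facets, the dual graph is a single cycle of length at least $q^{d-1}-1$, and deleting one facet gives the desired corridor. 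To salvage your approach you would need to exhibit the tag design explicitly and verify the no-shortcut condition for all collinear, not just adjacent, pairs.
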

Observe that this matches the upper bound in Theorem~\ref{thm:Santosbound}, modulo a factor 
in $\Theta(d^{3/2}e^{-d})$, since $d!\simeq e^{-d}d^d \sqrt{2\pi d}$.

\begin{remark}
Our proof of Theorem~\ref{thm:pure_complexes} uses an arithmetic construction valid only when the number $n$ of vertices is of the form $q(d+2)$ for a sufficiently large prime power $q$. 
But every interval $[m,2m]$ contains an $n$ of that form, because there is a power of $2$ between $m/(d+2)$ and $m/2(d+2)$). 
Hence, the theorem is also valid ``for every $d$ and sufficiently large $n$'', modulo an extra factor of $2^{d-1}$ in the denominator.
\end{remark}

Motivation for this question and relatives of it comes from the Hirsch Conjecture which, written in the language of simplicial complexes, said: \emph{The maximum diameter of a polytopal simplicial $(d-1)$-sphere with $n$ vertices cannot exceed $n-d$.} Here, a \emph{simplicial sphere} is a pure simplicial complex whose underlying topological space is homeomorphic to a sphere. A \emph{polytopal sphere} is the simplicial complex of proper faces of a simplicial polytope. Although the original Hirsch conjecture has been disproved~\cite{Santos:Hirsch-counter}, the only counter-examples to it that we know of exceed the conjectured diameter by a small fraction. In particular, the following polynomial version of the Hirsch conjecture is open, even in the linear case (the case $k=1$):

\begin{conjecture}[Polynomial Hirsch Conjecture]
\label{conj:poly_Hirsch}
There are constants $c$, $k$ such that the diameter of every polytopal $(d-1)$-sphere with $n$ vertices is bounded above by $c n^k$.
\end{conjecture}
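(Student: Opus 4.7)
The plan is to attack Conjecture~\ref{conj:poly_Hirsch} by strengthening the Kalai--Kleitman quasi-polynomial argument using specifically polytopal structure, rather than the purely combinatorial features of simplicial spheres on which the existing proof relies. Writing $H(n,d)$ for the maximum diameter of a polytopal $(d-1)$-sphere with $n$ vertices, Kalai--Kleitman gives a recursion of the form
\[
H(n,d) \le 2 H(\lfloor n/2 \rfloor, d) + H(n-1,d-1) + 2,
\]
which unfolds to $H(n,d) \le n^{\log_2 d + O(1)}$. My target would be to replace this exponent by an absolute constant by sharpening either the depth or the branching factor of the recursion using information that holds only for genuine convex polytopes.

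First I would revisit the proof of the recursion: one partitions the facets of $P$ into breadth-first-search layers $L_0,L_1,\dots,L_k$ from an arbitrary starting facet; either some layer $L_j$ uses only half of the vertices and one recurses on $n$, or every layer past a certain index introduces a new vertex whose link is a polytopal $(d-2)$-sphere and one recurses on $d$. The quasi-polynomial factor arises because the BFS layers are allowed to remain tiny. The geometric ingredients I would try to leverage are (i) the Holt--Klee theorem, giving $d$ internally disjoint $\varphi$-monotone paths in $\graph(P^*)$ for any generic linear functional $\varphi$, and (ii) the $g$-theorem, which constrains $f$-vectors away from the degenerate shapes used by Eisenbrand--H\"ahnle--Razborov--Rothvo\ss\ to saturate Kalai--Kleitman in the abstract setting. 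Combining them, the hope is to show that the BFS layers grow geometrically, yielding a recursion of the form $H(n,d) \le H(n/c,d) + \mathrm{poly}(n,d)$ for some absolute $c > 1$ and hence a polynomial bound.

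The main obstacle, and the reason this conjecture has resisted attack for nearly sixty years, is precisely the last clause above. Every known upper-bound argument is essentially combinatorial and applies to the broader class of connected layer families, for which the quasi-polynomial bound is known to be tight; turning a geometric property such as Holt--Klee or the $g$-theorem into a sharpening of the BFS analysis requires identifying a combinatorial consequence of polytopality that is genuinely absent from abstract simplicial spheres and that interacts well with the vertex deletion used to close the recursion. I expect this to be the crux of any proof, and quite possibly beyond current techniques: a more modest interim target would be to improve $\log_2 d$ to $\sqrt{\log_2 d}$ or similar in the exponent, which would already represent substantial progress on this long-standing open problem.
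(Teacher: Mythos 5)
The statement you are addressing is not a theorem of this paper: it is stated as an open conjecture (the Polynomial Hirsch Conjecture), cited purely as motivation for the constructions that follow, and the paper explicitly notes that it remains open even in the linear case $k=1$. So there is no proof in the paper to compare against, and your text, read as a proof attempt, does not close the conjecture either --- it is a research programme whose decisive step is left unproved. Concretely, the gap is the passage from the geometric ingredients (Holt--Klee monotone paths, the $g$-theorem's constraints on $f$-vectors) to the claimed recursion $H(n,d)\le H(n/c,d)+\mathrm{poly}(n,d)$ with an absolute constant $c>1$. Nothing in your sketch shows that the breadth-first-search layers of a polytopal sphere must grow geometrically; Holt--Klee gives $d$ disjoint monotone paths but says nothing about how many vertices each BFS layer consumes, and the $g$-theorem constrains global face numbers, not the local layer structure used in the Kalai--Kleitman argument. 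You acknowledge this yourself in the final paragraph, which is to your credit, but it means the proposal establishes nothing beyond the known quasi-polynomial bound $n^{\log_2 d+O(1)}$.

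A further caution: the fact that the quasi-polynomial bound is tight for connected layer families (Eisenbrand--H\"ahnle--Razborov--Rothvo\ss) shows that \emph{some} non-abstract input is necessary, but it does not by itself suggest which combinatorial consequence of polytopality could be fed into the recursion, nor that one compatible with the vertex-deletion step exists. If you want to work adjacent to this paper's actual content, note that its results point in the opposite direction: Theorems~\ref{thm:pure_complexes} and~\ref{thm:pseudo_manifolds} show that already for pure complexes and for pseudo-manifolds without boundary the diameter can grow like $c_d n^{d-1}$, so any proof of the conjecture must use structure strictly stronger than these classes possess (normality, shellability, or genuine polytopality). Your proposal should be presented as a plan of attack on an open problem, not as a proof.
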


An approach that has been tried often is to generalize the conjecture to more general complexes than polytopal spheres. Theorem~\ref{thm:Santosbound} shows that generalizing to arbitrary pure complexes is too much, but it is plausible that \emph{simplicial manifolds} still have polynomial diameter. In particular, the polymath3 project~\cite{Kalai:polymath3}  was devoted to (a more abstract and generalized version of) the following conjecture, inspired by the results in~\cite{EHRR:limits-of-abstraction} and which implies Conjecture~\ref{conj:poly_Hirsch} with $k=2$ and $c=1$:

\begin{conjecture}[H\"ahnle, in \cite{Kalai:polymath3}]
\label{conj:polymath3}
The diameter of every \emph{normal} pure $(d-1)$-complex with $n$ vertices is bounded above by $dn$.
\end{conjecture}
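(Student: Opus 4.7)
The plan is to attack Conjecture~\ref{conj:polymath3} by induction on the dimension $d$, exploiting that in a normal complex $C$ the link $\lk_C(\sigma)$ of every face $\sigma$ is itself a normal pure complex of dimension $d-1-|\sigma|$ on at most $n$ vertices. The base cases $d=1$ (diameter at most $1$, since all facets share the empty ridge) and $d=2$ (the dual graph is the line graph of a graph on $n$ vertices, whose diameter is $O(n)$) are direct. The inductive hypothesis is then that every normal $(d'-1)$-complex on $m$ vertices has diameter at most $d'm$ for all $d'<d$.

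For the inductive step, fix two facets $F$ and $G=\{g_1,\dots,g_d\}$ of $C$ and construct a walk from $F$ to $G$ in $d$ \emph{phases}: setting $F_0:=F$, phase $i$ starts at a facet $F_{i-1}$ containing $\{g_1,\dots,g_{i-1}\}$ and ends at a facet $F_i$ containing $\{g_1,\dots,g_i\}$. Every intermediate facet of phase $i$ contains $\{g_1,\dots,g_{i-1}\}$, so the walk takes place inside the dual graph of $\lk_C(\{g_1,\dots,g_{i-1}\})$, a normal $(d-i)$-complex on at most $n-i+1$ vertices. By the inductive hypothesis its diameter is at most $(d-i)(n-i+1)$, which bounds the length of phase $i$.

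Naively summing gives $\sum_{i=1}^d (d-i)(n-i+1)\approx d^2 n/2$, which misses the conjectured bound by a factor of roughly $d/2$. The essential step, and the main obstacle, is to \emph{amortize} across phases: one must choose each $F_i$ not arbitrarily but to minimize a potential $\Phi(F_i)$ that anticipates future phases, so that the cost of phase $i$ is charged partly to reductions in $\Phi$ gained during phases $j<i$. Equivalently, one looks for an assignment of weights to the vertices of $C$, summing to at most $dn$, such that every edge of the dual graph crosses a level set of the induced function on facets exactly once; the bound then falls out of a telescoping argument.

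The hard part is precisely this amortization. In the more general ``connected layer families'' abstraction of pure complexes studied in the polymath3 project, superlinear lower bounds are known, so the $dn$ bound is actually \emph{false} at that level of generality. Hence any proof of Conjecture~\ref{conj:polymath3} must exploit structural features of genuine normal simplicial complexes — for instance the fixed-rank constraint that each facet has exactly $d$ vertices, or the fact that iterated links carry compatible geometric data — that the layer-family abstraction ignores. A realistic attack would probably replace the phase-by-phase order of vertices of $G$ with a single invariant built simultaneously from the nested links $\lk_C(\{g_1,\dots,g_{i}\})$ for all $i$, coupling the potentials of successive phases rather than paying for them independently.
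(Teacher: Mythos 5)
The statement you were asked about is Conjecture~\ref{conj:polymath3}, which is an \emph{open conjecture}: the paper does not prove it (it is quoted from the polymath3 project~\cite{Kalai:polymath3} purely as motivation, and the paper's own constructions concern non-normal complexes and pseudo-manifolds of exponential diameter, which say nothing about it). Your text is likewise not a proof: you yourself identify the ``amortization'' step as missing, so what you have written is an outline of a known line of attack, not an argument establishing the bound $dn$. Beyond that honest admission, there is a concrete flaw even in the part you present as routine. The phased induction is circular at the first phase: the walk from $F_0=F$ to a facet containing $g_1$ takes place in $\lk_C(\emptyset)=C$ itself, a complex of the \emph{same} dimension as the one you are bounding, so the inductive hypothesis (which applies only to $d'<d$) gives you nothing there. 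Consequently not even the ``naive'' bound of roughly $d^2n/2$ follows as written. Recursions of this link-by-link type are exactly what produce the classical bounds for normal complexes --- roughly $2^{d-1}n$ (Larman/Barnette-style) and the quasi-polynomial Kalai--Kleitman bound --- and getting a per-phase cost that is linear in $n$ with constants summing to $d$ is precisely the open problem, not a technicality.

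One further factual point: your claim that the $dn$ bound is ``actually false'' for the connected-layer-family abstraction is not supported by what is known. The superlinear lower bounds of Eisenbrand, H\"ahnle, Razborov and Rothvo\ss~\cite{EHRR:limits-of-abstraction} are of order about $n^2/\log n$ with the rank $d$ comparable to $n$, which is perfectly consistent with a bound of the form $dn$; indeed H\"ahnle's conjecture was posed, and remains open, at exactly that abstract level --- this is what the paper means by polymath3 studying ``a more abstract and generalized version'' of Conjecture~\ref{conj:polymath3}. So the suggestion that a proof must break the abstraction is not forced by current knowledge, and the conjecture itself remains open both for normal complexes and for the layer-family generalization.
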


Here a pure simplicial complex $C$ is called \emph{normal} if every two facets $F_1$ and $F_2$ are connected in $\graph(C)$ via a path with the property that all facets in the path contain $F_1\cap F_2$. Another important class of pure complexes are \emph{pseudo-manifolds} without boundary%
\footnote{$C$ is a \emph{pseudo-manifold with boundary} if ridges are contained in \emph{at most} two facets, the boundary of $C$ consisting of the ridges lying in only one facet. Standard usage is to say ``pseudo-manifold'' alone meaning ``without boundary'' and ``pseudo-manifold with boundary'' when boundary is allowed. But to avoid confusion we here insist in saying ``without boundary'' when boundary is forbidden.}%
: strongly connected complexes in which every ridge belongs to exactly two facets. For example, Adler and Dantzig~\cite{AdlDan:abstract} call normal pseudo-manifolds without boundary \emph{abstract polytopes}. Here we prove the following:

\begin{theorem}
\label{thm:pseudo_manifolds}
For every strongly connected pure $(d-1)$-dimensional simplicial complex with $n$ vertices and diameter $\delta$ there is a $(d-1)$-dimensional pseudo-manifold without boundary with $2n$ vertices and diameter at least $\delta+2$.
\end{theorem}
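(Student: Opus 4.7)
The plan is to construct \(M\) by doubling the vertex set of \(C\) and gluing the two copies via a family of bridge facets. Let \(V' = \{v' : v \in V\}\) be a disjoint twin set, so that \(M\) sits on \(V \cup V'\) with \(2n\) vertices. The facets of \(M\) will consist of a copy of \(C\) on \(V\), a mirror copy \(C'\) on \(V'\) (for each \(F \in C\), include \(F' := \{v' : v \in F\}\)), and a collection of bridge facets using both \(V\) and \(V'\), chosen so that every ridge of \(M\) is contained in exactly two facets.

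A natural candidate for the bridges is the swap chain: fix a linear order on \(V\), and for each facet \(F = \{v_1 < \cdots < v_d\} \in C\) and each \(i \in \{1, \ldots, d-1\}\), include the facet \(F^{(i)} := \{v_1', \ldots, v_i', v_{i+1}, \ldots, v_d\}\). Consecutive swaps \(F^{(i)}\) and \(F^{(i+1)}\) share a ridge, so the chain \(F = F^{(0)}, F^{(1)}, \ldots, F^{(d)} = F'\) forms a path in \(\graph(M)\). The first technical task is to refine this family so that every ridge of \(M\) has multiplicity exactly two: a ridge \(R\) with shadow \(\bar R \subseteq V\) and twin positions \(T \subseteq \bar R\) inherits a multiplicity depending on the rank, inside each \(F \supseteq \bar R\), of the ``free'' vertex \(F \setminus \bar R\). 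Restricting the bridge inclusions by a suitable parity or order condition, and adjoining a few correcting facets where needed, uses the \(n\) twin vertices as degrees of freedom to force the desired multiplicity on every ridge, be it fully in \(V\), fully in \(V'\), or mixed.

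For the diameter bound, \(C\) embeds as a subcomplex of \(M\), so \(\graph(M) \supseteq \graph(C)\) and \(\diam \graph(M) \ge \delta\). Pick \(F_0, F_\delta \in C\) at distance \(\delta\) in \(\graph(C)\). The additional \(+2\) comes from the bridge structure: a shortest \(\graph(M)\)-path from \(F_0\) to the mirror \(F_\delta'\) must shift the shadow from \(F_0\) to \(F_\delta\), accounting for at least \(\delta\) steps, and must replace every vertex of the initial facet by a twin, which, in the swap-chain construction, requires at least two further steps that cannot be absorbed into the shadow shifts. This yields \(d_{\graph(M)}(F_0, F_\delta') \ge \delta + 2\).

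The principal obstacle is the refinement of the bridge family to yield a genuine pseudo-manifold without boundary: it is a global combinatorial argument balancing ridge multiplicities across the \(V\)/\(V'\) divide, rather than a local fix at each problematic ridge, and one must simultaneously ensure that the refinement does not introduce any shortcut bridges that would collapse the distance argument above.
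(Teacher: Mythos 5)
Your construction leaves unproved exactly the step that constitutes the theorem: that every ridge of $M$ lies in exactly two facets. As written, the swap chains break this in both directions. On one hand they create excess multiplicity: the ridge $\{v_2,\dots,v_d\}=F\setminus\{v_1\}=F^{(1)}\setminus\{v_1'\}$ lies entirely in $V$ and is contained in $F$, in the bridge facet $F^{(1)}$, and in every other facet of $C$ containing it, so whenever $F\setminus\{\min F\}$ is an interior ridge of $C$ its multiplicity is at least $3$. On the other hand they leave boundary: a ridge of $C$ contained in only one facet $F$, say $F\setminus\{v_d\}$, still contains the unprimed $v_1$ and hence lies in no $F^{(i)}$ with $i\ge 1$, so it generically remains a ridge of multiplicity $1$ in $M$. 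You acknowledge that a ``suitable parity or order condition'' plus ``a few correcting facets'' are needed, but no such rule is exhibited, and it is far from clear that one exists for an arbitrary strongly connected $C$; this is the theorem, not a routine verification. The diameter bound has the same status: bridge facets $F^{(i)}$ and $G^{(j)}$ from different chains can share ridges, so $\graph(M)\supseteq\graph(C)$ does not by itself give $\diam\graph(M)\ge\delta$, and the claimed extra $2$ steps are asserted rather than proved.

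The paper's proof avoids both problems with two ideas absent from your proposal. First, it reduces to the case where $C$ is a \emph{corridor} (dual graph a path), which is legitimate since the maximum diameter is always attained at one; then each facet $F_i$ has at most two vertices, $a_i\in F_i\setminus F_{i+1}$ and $b_i\in F_i\setminus F_{i-1}$, whose removal gives an interior ridge, and all other ridges of $C$ are boundary ridges. Second, instead of keeping $C$, a mirror copy, and bridges, it takes as facets of $C'$ all $2^{d-1}$ two-colorings of each $F_i$ in which $a_i$ and $b_i$ get the same color. Ridge multiplicities are then checked locally and equal $2$ everywhere except on the $2^{d-1}$ colorings of $F_0\setminus\{b_0\}$ and of $F_\delta\setminus\{a_\delta\}$, which form two cross-polytope $(d-2)$-spheres; these are capped off by coning over a doubled vertex, and the two caps supply the ``$+2$'' in the diameter. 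To complete your argument you would need either to adopt this corridor-plus-coloring scheme or to specify and verify an explicit bridge rule; as it stands the proposal does not prove the statement.
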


Together with Theorem~\ref{thm:pure_complexes} this implies the following, where $H_{pm}(n,d)$ denotes the maximum diameter of $(d-1)$-dimensional pseudo-manifolds without boundary. As far as we know this is the first construction of pseudo-manifolds without boundary and of exponential diameter. 

\begin{corollary}
\label{cor:pseudo-manifolds}
For every $d\in \N$ there are infinitely many $n\in\N$ such that:
\[
  H_{pm}(n,d) \ge \frac{n^{d-1}}{(2(d+2))^{d-1}}-1.
\]
\end{corollary}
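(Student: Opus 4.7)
The plan is to combine Theorems~\ref{thm:pure_complexes} and~\ref{thm:pseudo_manifolds} by a direct substitution. Given a target $n$ of the form $2q(d+2)$ with $q$ a sufficiently large prime power, I would set $n' := n/2 = q(d+2)$, which falls in the arithmetic class required by the remark after Theorem~\ref{thm:pure_complexes}. That theorem then supplies a strongly connected pure $(d-1)$-complex $C$ on $n'$ vertices with diameter
\[
\delta \ge \frac{n'^{d-1}}{(d+2)^{d-1}} - 3.
\]

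Next I would feed $C$ into Theorem~\ref{thm:pseudo_manifolds}, obtaining a $(d-1)$-dimensional pseudo-manifold without boundary on $2n' = n$ vertices and with diameter at least $\delta + 2$. Substituting the lower bound on $\delta$ and the identity $n' = n/2$ yields
\[
H_{pm}(n,d) \ge \delta + 2 \ge \frac{(n/2)^{d-1}}{(d+2)^{d-1}} - 1 = \frac{n^{d-1}}{(2(d+2))^{d-1}} - 1,
\]
which is precisely the claimed inequality.

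It remains to verify that infinitely many admissible $n$ exist. This follows from the same arithmetic observation used in the remark after Theorem~\ref{thm:pure_complexes}: every interval $[m, 2m]$ contains an integer of the form $2q(d+2)$ with $q$ a power of two, since there is a power of two between $m/(2(d+2))$ and $m/(d+2)$. The only real content of the corollary is thus the bookkeeping above; no additional obstacle arises because Theorem~\ref{thm:pseudo_manifolds} imposes no arithmetic restriction on its input complex and composes cleanly with Theorem~\ref{thm:pure_complexes}.
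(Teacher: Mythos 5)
Your proof is correct and follows exactly the route the paper intends: feed the corridor from Theorem~\ref{thm:pure_complexes} on $n'=n/2$ vertices into Theorem~\ref{thm:pseudo_manifolds}, and the $+2$ in the diameter bound absorbs two units of the $-3$, giving the stated $-1$. The arithmetic and the argument for infinitely many admissible $n$ both check out.
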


In a similar spirit, Todd~\cite{Todd:duoids} defined \emph{semi-duoids} as the pure simplicial complexes in which every ridge lies in an even number of facets, and called \emph{duoids} the semi-duoids that do not properly contain other semi-duoids. Semi-duoids were later called \emph{oiks} (as a short-hand for ``Euler complexes'') by Edmonds~\cite{Edmonds:oiks} (see also~\cite{VeghStengel}). 
One of the results in~\cite{Todd:duoids} is the construction of duoids with quadratic diameter. Since every pseudo-manifold without boundary is a duoid, Corollary~\ref{cor:pseudo-manifolds} significantly improves that construction.

\section{Proof of Theorem~\ref{thm:pure_complexes}}

Our construction is arithmetic, and uses the following well-known result that can be found, for example, in~\cite[Theorem 33.16]{LidlPilz}:

\begin{theorem}
\label{thm:sequence}
Let $p(x) = x^{d} + a_1 x^{d-1} +\dots+ a_d$ be a primitive polynomial of degree $d$ over the field $\F_q$ with $q$ elements, for some $d\in \N$ and some prime power $q$. Consider the sequence $(u_n)_{n\in \N}$ defined by the linear recurrence
\[
u_{n+d} + a_1 u_{n+d-1} +\dots+ a_d u_n =0,
\]
starting with any non-zero vector $(u_1, \dots,u_{d}) \in \F_q^d$. Then, $(u_n)_{n\in \N}$ has period $q^d-1$. In particular, its intervals of length $d$ cover all of $ \F_q^d\setminus \{(0,\dots,0)\}$. That is:
\[
\left\{(u_i,\dots,u_{i+d-1}) : i\in \{1,\dots, q^d-1\} \right\}= \F_q^d \setminus \{(0,\dots,0)\}.
\]
\end{theorem}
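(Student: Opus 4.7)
The plan is to recast the recurrence as iteration of a linear map and, via the primitivity of $p$, identify that map with multiplication by a generator of $\F_{q^d}^\times$. Group the sequence into the \emph{state vector} $s_n := (u_n, u_{n+1}, \ldots, u_{n+d-1})\in \F_q^d$; the recurrence is then the single equation $s_{n+1} = C_p\, s_n$, where $C_p$ is the companion matrix of $p(x)$. Consequently $s_n = C_p^{n-1} s_1$, and both conclusions of the theorem---period $q^d-1$, and the states ranging over all of $\F_q^d\setminus\{0\}$---reduce to a single claim: the orbit of every nonzero $s_1$ under $\langle C_p\rangle$ has size exactly $q^d-1$.

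To establish that claim I would invoke the algebraic identification $\F_q[C_p] \cong \F_q[x]/(p(x)) \cong \F_{q^d}$, which is valid because $p$, being primitive, is in particular irreducible and therefore equals the minimal polynomial of $C_p$. Under this identification, $\F_q^d$ becomes a module over $\F_{q^d}$, necessarily of $\F_{q^d}$-dimension one by an $\F_q$-dimension count, i.e., a copy of $\F_{q^d}$ itself. The action of $C_p$ on it translates into multiplication by $\alpha := x \bmod p(x)$. By the definition of ``primitive'', $\alpha$ generates $\F_{q^d}^\times$ and so has order $q^d-1$; hence for any nonzero $v$ the orbit $\{\alpha^k v: k\ge 0\}$ equals $v\cdot \F_{q^d}^\times = \F_{q^d}^\times$, sweeping through all $q^d-1$ nonzero vectors precisely once per period. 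Both statements of the theorem follow.

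The result is classical and the cited reference in Lidl and Pilz does exactly this, so the main difficulty is bookkeeping rather than conceptual. The only subtle point is to pin down the convention for ``primitive polynomial'' in use---namely the coding-theoretic one demanding that a root of $p$ generate $\F_{q^d}^\times$, rather than the Gaussian ``content-one'' notion---and to observe that this convention forces irreducibility, which is what makes $\F_q[x]/(p(x))$ a field and guarantees that $C_p$ is invertible with $\alpha$ generating its multiplicative group.
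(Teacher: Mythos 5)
Your argument is correct: passing to the state vector, identifying $\F_q[C_p]$ with $\F_{q^d}$ via the irreducibility of $p$, and using that a root of a primitive polynomial generates $\F_{q^d}^\times$ is exactly the standard proof of this classical fact. The paper does not prove Theorem~\ref{thm:sequence} at all --- it simply cites Lidl--Pilz --- so there is nothing to compare against beyond noting that your write-up supplies the expected textbook argument, including the one genuinely relevant caveat (which convention of ``primitive'' is in force).
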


Remember that a primitive polynomial of degree $d$ is the minimal polynomial of a primitive element in the degree $d$ extension $\F_{q^d}$ of $\F_d$. The number of monic primitive polynomials of degree $d$ over $\F_q$ equals $\phi(q^d-1)/d$, since $\F_{q^d}$ has $\phi(q^d-1)$ primitive elements, and each primitive polynomial is the minimal polynomial of $d$ of them.
In our construction we will need the coefficients of $p(x)$ to be all different from zero. Primitive polynomials with this property do not exist for all $q$, but they exist when $q$ is sufficiently large with respect to $d$, which is enough for our purposes:

\begin{lemma}
  For every fixed $d\in\N$ and every sufficiently large prime power $q$, there is a primitive polynomial of degree $d$ over $\mathbb{F}_q$ with all coefficients different form zero.
\end{lemma}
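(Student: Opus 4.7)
The plan is a counting argument, comparing the number of monic primitive polynomials of degree $d$ over $\F_q$ with the number of monic polynomials that fail the all-coefficients-nonzero condition. Writing $p(x) = x^d + a_1 x^{d-1} + \cdots + a_d$, the number of monic degree-$d$ polynomials over $\F_q$ having some $a_i = 0$ equals $q^d - (q-1)^d$, which by the binomial theorem is at most $d\, q^{d-1}$. On the other hand, as recalled in the paragraph just before the lemma, the number of monic primitive polynomials of degree $d$ over $\F_q$ is exactly $\phi(q^d-1)/d$. Hence it is enough to show
$$\phi(q^d - 1) \;>\; d^2 q^{d-1}$$
for all prime powers $q$ sufficiently large in terms of $d$.

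To prove this inequality I would invoke a classical lower bound on Euler's totient, such as the Rosser--Schoenfeld estimate: for every integer $N \ge 3$,
$$\phi(N) \;\ge\; \frac{N}{e^{\gamma}\log\log N + 3/\log\log N}.$$
Applying it with $N = q^d - 1$ and treating $d$ as a fixed constant yields $\phi(q^d-1) \ge c(d)\, q^d/\log\log q$ for all large enough $q$, and since $q/\log\log q \to \infty$ this dominates $d^2 q^{d-1}$, completing the argument.

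The only delicate point, and the reason the lemma is only asymptotic, is the totient estimate: one cannot use a naive inequality such as $\phi(N) \ge \sqrt{N}$, because $\phi(N)/N$ can in fact drop to $\Theta(1/\log\log N)$ for highly composite values of $N$. However, any bound of the form $\phi(N) = \Omega(N/\log\log N)$ is sufficient, and such a bound is completely standard. Note that the asymptotic regime (fixed $d$, $q \to \infty$) is exactly the one needed in the proof of Theorem~\ref{thm:pure_complexes} and in the remark that follows it.
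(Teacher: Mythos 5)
Your proposal is correct and takes essentially the same approach as the paper: both count the monic primitive polynomials, $\phi(q^d-1)/d$, against the monic polynomials with some zero coefficient, $q^d-(q-1)^d\le dq^{d-1}$, and win for large $q$ via a standard lower bound on Euler's totient. The only (immaterial) difference is the totient estimate used: you invoke Rosser--Schoenfeld, $\phi(N)=\Omega(N/\log\log N)$, while the paper uses $\phi(N)>N^{1-\epsilon}$ with $\epsilon=1/d^2$; either suffices.
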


\begin{proof}
This follows from the fact that the number of primitive monic polynomials of degree $d$ is greater than the number of monic polynomials of degree $d$ with at least one zero coefficient, for $q$ large. 

Indeed, the latter is $q^d-(q-1)^d\leq dq^{d-1}$. The former equals $\phi(q^d-1)/d$, which is greater than $(q^d-1)^{1-\epsilon}/d$, for every $0<\epsilon<1$ and sufficiently large $q$. Letting $\epsilon=\frac{1}{d^2}$ we get:
  \[
    \frac{\phi(q^d-1)}{d}>\frac{(q^d-1)^{1-\frac{1}{d^2}}}{d}>\frac{(q^d/2)^{1-\frac{1}{d^2}}}{d}=\frac{q^{(d^2-1)/d}}{2^{1-\frac{1}{d^2}}d}>\frac{q^{d-\frac{1}{d}}}{2d} >dq^{d-1}.
  \]
\end{proof}

With this we can now show our first construction proving Theorem~\ref{thm:pure_complexes}.

\begin{theorem}
\label{thm:lfsr}
Suppose that $p(x)\in \F_q[x]$ is a primitive polynomial of degree $d-1$ with no zero coefficients. Then, there is a pure simplicial complex $C$ of dimension $d-1$, with $n=(d+2)q$ vertices and at least $\frac{n^{d-1}}{(d+2)^{d-1}}-1$ facets whose dual graph is a cycle.
\end{theorem}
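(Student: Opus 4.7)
The approach is to build $C$ as a ``sliding-window'' complex on the LFSR sequence. I would identify the vertex set with $V := \Z_{d+2} \times \F_q$, which has cardinality $(d+2)q = n$, and denote by $(u_s)_{s\in\Z}$ the periodic sequence of period $q^{d-1}-1$ produced by $p(x)$ via Theorem~\ref{thm:sequence}, starting from any nonzero initial state. For each $t\in\Z$ I define
\[
F_t := \bigl\{\bigl((t+j) \bmod (d+2),\; u_{t+j-d+1}\bigr) : j = 0,1,\dots,d-1\bigr\}.
\]
Since the $d$ first coordinates $t, t+1, \dots, t+d-1$ are pairwise distinct modulo $d+2$, $F_t$ is a $d$-set, and I take $C$ to be the family of all distinct such facets.

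The ridge-adjacency $|F_t \cap F_{t+1}| = d-1$ is immediate: the $d-1$ vertices with first coordinates $t+1, \dots, t+d-1 \pmod{d+2}$ appear in both facets with the same values $u_{t-d+2}, \dots, u_t$. To conclude that the dual graph is exactly a cycle, I would rule out any other pair $F_t, F_{t'}$ sharing a ridge, by case analysis on $c := t'-t \bmod (d+2)$. If $c \not\equiv 0, \pm 1 \pmod{d+2}$, the two $d$-position windows $W_t := \{t, \dots, t+d-1\}$ and $W_{t'}$ on the cycle $\Z_{d+2}$ overlap in at most $d-2$ positions, so automatically $|F_t \cap F_{t'}| \le d-2$. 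If $c \equiv \pm 1 \pmod{d+2}$, matching all $d-1$ values at the overlapping positions would force $d-1$ consecutive LFSR values to satisfy $u_s = u_{s+(d+2)k}$ for some $k$, which by Theorem~\ref{thm:sequence} forces $(d+2)k$ to be a multiple of $q^{d-1}-1$, i.e.\ $c \equiv \pm 1 \pmod{T}$ with $T := \lcm(d+2,\; q^{d-1}-1)$.

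The crucial case is $c \equiv 0 \pmod{d+2}$ with $c \not\equiv 0 \pmod{T}$: here $W_t = W_{t'}$ and a ridge-adjacency corresponds to exactly one mismatch among the $d$ differences $\delta_j := u_{t+j-d+1} - u_{t+j+c-d+1}$ ($j=0,\dots,d-1$). These differences satisfy the same recurrence defined by $p$, so
\[
\delta_{d-1} + a_1 \delta_{d-2} + \dots + a_{d-1} \delta_0 = 0.
\]
If only $\delta_{i_0}$ is nonzero, this relation reduces to $a_{d-1-i_0}\delta_{i_0} = 0$, and the hypothesis that every $a_j$ is nonzero gives a contradiction. Putting everything together, the dual graph of $C$ is a single cycle of length $T \ge q^{d-1} - 1 = \frac{n^{d-1}}{(d+2)^{d-1}} - 1$, proving the theorem. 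I expect the main obstacle to be precisely this last case: without the ``no zero coefficients'' hypothesis on $p(x)$, a single accidental coincidence of LFSR values could propagate through the recurrence and introduce an unwanted chord in what would otherwise be a plain cycle.
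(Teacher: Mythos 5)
Your construction is the same as the paper's (a sliding window of length $d$ over the LFSR sequence of the degree-$(d-1)$ primitive polynomial, with vertices colored cyclically by $\Z_{d+2}$), and the verification is correct: the paper organizes the check ridge-by-ridge (the color constraint plus the recurrence with nonzero coefficients pins down the at most two facets containing each ridge), while you organize it by pairs of facets, but the key point — a linear relation all of whose coefficients are nonzero cannot have exactly one nonzero term — is used identically in both. This is essentially the paper's proof.
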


\begin{proof}
Our set of vertices is $V=\F_q \times [d+2]$. That is, we have as vertices the elements of $\F_q$ but each comes in $d+2$ different ``colors''. In the sequence $(u_i)_{i\in \N}$ of Theorem~\ref{thm:sequence} we color its terms cyclically. That is, call
\[
v_i=(u_i,\ n \mod(d+2)).
\]
Let $C$ be the simplicial complex consisting of the intervals of length $d$ in the sequence $(v_i)_{i\in \N}$. That is, we let:
\[
F_i= \{v_i,\dots,v_{i+d-1}\}, \qquad
C= \left\{F_i : i\in \{1,\dots, q^d-1\} \right\}.
\]
Observe that the sequence $\{F_i\}_{i\in \N}$ is periodic of period $\lcm\{q^d-1, d+2\} \ge q^d-1=\frac{n^{d-1}}{(d+2)^{d-1}}-1$. Also, by construction, $\graph(C)$ contains a Hamiltonian cycle. We claim that, in fact, $\graph(C)$ equals that cycle. 

For this, observe that ridges in  $C$ are of two types: some are of the form $\{v_i,\dots,v_{i+d-2}\}$ and some are of the form $\{v_i,\dots,v_j,v_{j+2},\dots,v_{i+d-1}\}$. We will study the facets that these types of ridges may belong to.

For a ridge $R=\{v_i,\dots,v_{i+d-2}\}$ to be contained in a facet $F$ we need the color of the vertex in $F\setminus R$ to be either $i-1$ or $i+d-1$ (modulo $d+2$).

Once we have  the color $c$ of the new vertex $v=(u,c)\in F\setminus R$, the recurrence relation (and the fact that $p$ has non-zero coefficients) gives us only one choice for $u$. Thus, $R$ is only contained in the two contiguous facets $F_{i-1}$ and $F_i$.

The same argument applies to a ridge $\{v_i,\dots,v_j,v_{j+2},\dots,v_{i+d-1}\}$. Now the color of the new vertex must be $j+1\mod d+2$ and the recurrence relation implies the vertex to be precisely $v_{j+1}$.
\end{proof}

\medskip
\noindent
\textbf{Proof of Theorem~\ref{thm:pure_complexes}.}\ 
Delete a  facet in the complex $C$ of Theorem~\ref{thm:lfsr}.
\qed
\medskip

A complex whose dual graph is a path, such as the one in this proof, is called
a \emph{corridor} in \cite{Santos:progress}. It is a general fact that the maximum diameter $H_\simp(n,d)$ is always attained at a corridor (\cite[Corollary 2.7]{Santos:progress}). That is to say, $H_\simp(n,d)$ equals the maximum length of an induced path in the \emph{Johnson graph} $J_{n,d}$: the dual graph of the complete complex of dimension $d-1$ with $n$ vertices. Induced paths in graphs are sometimes called \emph{snakes}.  In this language Theorem~\ref{thm:pure_complexes} can be restated as:

\begin{theorem}
There is a constant $c>0$ such that for every fixed $d$ and sufficiently large $n$ the Johnson graph $J_{n,d}$ contains snakes passing through a fraction $c^{-d}$ of its vertices.
\end{theorem}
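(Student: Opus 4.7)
The plan is to recognize this statement as a direct reformulation of Theorem~\ref{thm:pure_complexes} via the corridor/snake correspondence recorded in the paragraph preceding the theorem: a corridor (a pure complex whose dual graph is a path) has as its facet sequence an induced path in the Johnson graph $J_{n,d}$, since adjacency in $J_{n,d}$ is exactly the sharing of a ridge between two $d$-subsets of $[n]$.

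First I would invoke Theorem~\ref{thm:lfsr}, which, whenever $n=(d+2)q$ for a suitable prime power $q$, produces a pure $(d-1)$-complex whose dual graph is a cycle of length at least $n^{d-1}/(d+2)^{d-1}-1$. Deleting one facet of this cycle leaves a corridor, equivalently a snake in $J_{n,d}$, visiting at least $n^{d-1}/(d+2)^{d-1}-2$ vertices. The divisibility constraint on $n$ can be removed at the cost of an extra factor $2^{d-1}$ in the denominator, using the powers-of-$2$ argument from the remark after Theorem~\ref{thm:pure_complexes}.

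Second I would compare this with the trivial upper bound $\binom{n}{d-1}/(d-1)$ on the length of any snake from Theorem~\ref{thm:Santosbound}. Both quantities scale as $\Theta_d(n^{d-1})$ for fixed $d$ and $n\to\infty$, so the ratio of the construction to the upper bound tends, as $n\to\infty$, to
\[
\frac{(d-1)\,(d-1)!}{(d+2)^{d-1}}.
\]
A short Stirling computation gives $\Theta(d^{3/2}e^{-d})$, which is bounded below by $c^{-d}$ for any absolute constant $c>e$ once $d$ is large enough (and can be absorbed into a slightly larger $c$ for small $d$).

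The only genuine subtlety is parsing the phrase ``a fraction $c^{-d}$ of its vertices'': since $J_{n,d}$ has $\binom{n}{d}=\Theta(n^d)$ vertices while the maximum possible snake length is only $O(n^{d-1})$, no snake can cover a fraction of all vertices of $J_{n,d}$ that is constant in $n$. The natural reading, consistent with the abstract's assertion that the construction is ``optimal modulo a singly exponential factor in $d$'', is a $c^{-d}$ fraction of the Hirsch-type upper bound on snake length, and it is this interpretation that the argument above establishes.
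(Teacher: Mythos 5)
Your proof is correct and matches the paper's: the theorem is presented there without a separate argument, as a direct restatement of Theorem~\ref{thm:pure_complexes} via the corridor--snake dictionary ($H_\simp(n,d)$ equals the maximum length of an induced path in $J_{n,d}$), which is exactly the route you take, including the deletion of one facet from the cycle of Theorem~\ref{thm:lfsr} and the powers-of-two fix for the divisibility of $n$. Your reading of ``a fraction $c^{-d}$ of its vertices'' as relative to the $O(n^{d-1})$ upper bound on snake length --- rather than to all $\binom{n}{d}$ vertices of $J_{n,d}$, which no snake can achieve for fixed $d$ and large $n$ --- is the intended one, consistent with the abstract and with the remark following Theorem~\ref{thm:pure_complexes}.
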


A stronger statement is known for the graph of a $d$-dimensional hypercube: it contains snakes passing through a positive, independent of $d$, fraction of the vertices~\cite{AbbottKatchalski}.


\section{Proof of Theorem~\ref{thm:pseudo_manifolds}}

Let $C$ be the simplicial complex in the statement and $V$ its vertex set. By \cite[Corollary 2.7]{Santos:progress} there is no loss of generality in assuming that $C$ is a \emph{corridor}. That is, its dual graph is a path, so its facets come with a natural order $F_0.\dots, F_\delta$. 

We now construct a simplicial complex $C'$ in the vertex set $V'=V\times\{1,2\}$. For a vertex $v\in V$ we denote $v^1$ and $v^2$ the two copies of it in $V'$, and refer to the superscripts as ``colors''.
Let $a_i$ and $b_i$ be the unique vertices in $F_i\setminus F_{i+1}$ and $F_i\setminus F_{i-1}$, respectively. (For $F_0$ and $F_{\delta}$ we choose $a_0$ and $b_\delta$ arbitrarily, but different from $b_0$ and $a_\delta$). We define $C'$ as the complex containing, for each $F_i$, the $2^{d-1}$ colored versions of it in which $a_i$ and $b_i$ have the same color. The diameter of $C'$ is at least the same as that of $C$. Let us see that $C'$ is almost a pseudo-manifold:
\begin{itemize}
\item If a ridge $R$ in $C'$ is obtained from a colored version of $F_i$ by removing a vertex $v$ different from $a_i$ or $b_i$, then the only other facet containing $R$ is the copy of $F_i$ in which the color of $v$ is changed to the opposite one. This is so because the ``uncolored'' version of $R$ is a ridge of only the facet $F_i$ of $C$, by assumption.

\item If a ridge $R$ in $C'$ is obtained from a colored version of $F_i$ ($i<\delta$) by removing $a_i$ then the only other facet containing $R$ is obtained by adding to it the vertex $b_{i+1}$ with the same color as $a_{i+1}$ has in $R$.

\item Similarly, if a ridge $R$ in $C'$ is obtained from a colored version of $F_i$ ($i>0$) by removing $b_i$ then the only other facet containing $R$ is obtained by adding to it the vertex $a_{i-1}$ with the same color as $b_{i-1}$ has in $R$.
\end{itemize}

That is, the only ridges of $C'$ that do not satisfy the pseudo-manifold property are the $2^{d-1}$ colored versions of $R_1:=F_0\setminus \{b_0\}$ and the $2^{d-1}$ colored versions of $R_2:=F_\delta\setminus \{a_\delta\}$, which form two $(d-2)$-spheres, (each with the combinatorics of a cross-polytope). Choose a vertex $a$ in $R_1$ and a vertex $b\in R_2$, different from one another (which can be done since $R_1\ne R_2$). Consider the complex $C''$ obtained from $C'$ adding to it all the colored versions of $R_1\setminus a$ joined to $\{a^1,a^2\}$ and all the colored versions of $R_2\setminus b$ joined to $\{b^1,b^2\}$. The effect of this is glueing two $(d-1)$-balls with boundary the two $(d-2)$-spheres we wanted to get rid off, so that $C''$ is now a pseudo-manifold. (Observe that the new ridges introduced in $C''$ all contain either $\{a^1,a^2\}$ or $\{b^1,b^2\}$ so they were not already in $C'$).
\qed

\begin{remark}
\label{rem:closed-corridor}
In some contexts it may be useful to apply Theorem~\ref{thm:pseudo_manifolds} to \emph{closed corridors}, that is, pure complexes whose dual graph is a cycle. The construction in the proof works exactly the same except now $C'$ is already a pseudo-manifold, with no need to glue two additional balls to it as we did in the final step of the proof.
\end{remark}

\end{document}